\begin{document}
\newtheorem{theo}{Theorem}[section]
\newtheorem{prop}[theo]{Proposition}
\newtheorem{lemma}[theo]{Lemma}
\newtheorem{exam}[theo]{Example}
\newtheorem{coro}[theo]{Corollary}
\theoremstyle{definition}
\newtheorem{defi}[theo]{Definition}
\newtheorem{rem}[theo]{Remark}


\newcommand{\Bb}{{\bf B}}
\newcommand{\Nb}{{\bf N}}
\newcommand{\Qb}{{\bf Q}}
\newcommand{\Rb}{{\bf R}}
\newcommand{\Zb}{{\bf Z}}
\newcommand{\Ac}{{\mathcal A}}
\newcommand{\Bc}{{\mathcal B}}
\newcommand{\Cc}{{\mathcal C}}
\newcommand{\Dc}{{\mathcal D}}
\newcommand{\Fc}{{\mathcal F}}
\newcommand{\Ic}{{\mathcal I}}
\newcommand{\Jc}{{\mathcal J}}
\newcommand{\Lc}{{\mathcal L}}
\newcommand{\Oc}{{\mathcal O}}
\newcommand{\Pc}{{\mathcal P}}
\newcommand{\Sc}{{\mathcal S}}
\newcommand{\Tc}{{\mathcal T}}
\newcommand{\Uc}{{\mathcal U}}
\newcommand{\Vc}{{\mathcal V}}

\newcommand{\ax}{{\rm ax}}
\newcommand{\Acc}{{\rm Acc}}
\newcommand{\Act}{{\rm Act}}
\newcommand{\ded}{{\rm ded}}
\newcommand{\Gm}{{$\Gamma_0$}}
\newcommand{\ID}{{${\rm ID}_1^i(\Oc)$}}
\newcommand{\PA}{{\rm PA}}
\newcommand{\ACA}{{${\rm ACA}^i$}}
\newcommand{\RefP}{{${\rm Ref}^*({\rm PA}(P))$}}
\newcommand{\RefS}{{${\rm Ref}^*({\rm S}(P))$}}
\newcommand{\Rfn}{{\rm Rfn}}
\newcommand{\tar}{{\rm Tarski}}
\newcommand{\UNFA}{{${\mathcal U}({\rm NFA})$}}

\author{Nik Weaver}

\title [Constructive concepts]
       {Reasoning about constructive concepts}

\address {Department of Mathematics\\
          Washington University in Saint Louis\\
          Saint Louis, MO 63130}

\email {nweaver@math.wustl.edu}

\date{\em December 12, 2013}

\maketitle


\section{}

Second order quantification becomes problematic when a quantified concept
variable is supposed to function predicatively. There is a use/mention issue.

The distinction that comes into play is illustrated in Tarski's classic
biconditional
\cite{Tar}
$$\mbox{``Snow is white'' is true $\leftrightarrow$ snow is white.}$$
The expression ``snow is white'' is mentioned on the left side; there
it is linguistically inert and appears only as an object under discussion.
On the right side it is in use and has assertoric force.

To see the sort of problem that can arise, suppose we try to define the
truth of an arbitrary sentence by saying
$$\mbox{$\ulcorner A\urcorner$ is true $\leftrightarrow$ $A$,}\eqno{(*)}$$
where $A$ is taken to range over all sentences and the use/mention distinction
is indicated using corner brackets. Why is this one statement not a
global definition of truth?

The answer depends on whether the variable $A$ in ($*$) is understood
as schematic or as being implicitly quantified. If we interpret ($*$)
schematically, that is, as a sort of template which is not itself an
assertion but which becomes one when any sentence is substituted for $A$,
then it cannot be a definition of truth since it is the wrong kind of
object (a definition can be asserted, a template cannot). We could still
use it as a tool to construct truth definitions in limited settings:
given any target language, the conjunction of all substitution instances
of ($*$), as $A$ ranges over all the sentences of the language, would
define truth for that language. But this conjunction generally will not
belong to the target language, so we cannot construct a global definition
of truth for all sentences in this way. More to the point, we cannot
use this approach to build a language in which we have the ability to
discuss the truth of any sentence in that language.

The expression ($*$) could be directly interpreted as a truth definition for
all sentences only by universally quantifying the variable $A$. However,
this is impossible for straightforward syntactic reasons. In the quantifying
phrase ``for every sentence $A$'' the symbol $A$ has to represent a
mention, not a use, of an arbitrary sentence, since here the arbitrary
sentence is being referred to and not asserted. But we need $A$ to represent
a use in the right side of the biconditional. So there is no meaningful way
to quantify over $A$ in ($*$). This expression can only be understood as
schematic.

(The general principle is that a schematic expression can be obtained by
omitting any part of a well-formed sentence, but we can only quantify
over omitted noun phrases. ``Snow is white'' can
be schematized to either ``$x$ is white'' or ``Snow $C$'', but
``$(\exists x)(x$ is white$)$'' is grammatical while
``$(\exists C)($snow $C)$'' is not. In ($*$) the omission is
not nominal.)

Since a quantified variable can only represent a mention of an arbitrary
sentence, what we would need in order to formulate a global definition of
truth is a disquotation operator $\urcorner\cdot\ulcorner$. Then we could
let the variable $A$ refer to an arbitrary sentence and write
$$\mbox{For every sentence $A$, $A$ is true $\leftrightarrow$
$\urcorner A\ulcorner$.}$$
In other words, we need some way to convert mention into use. But that
is exactly what having a truth predicate does for us. The way we convert
a mention of the sentence ``snow is white'' into an actual assertion that
snow is white is by saying that the mentioned sentence is true. So in order
to state a global definition of truth we would need to, in effect, already
have a global notion of truth.

This is why no variant of ($*$) can succeed in globally defining truth.
A quantified variable representing an arbitrary sentence in general cannot
be invested with assertoric force unless we possess a notion of truth that
applies to all sentences, but writing down a global definition of truth
requires us to already be able to construe such a variable as
having assertoric force.

Truth seems unproblematic because in any particular instance it really is
unproblematic. Any meaningful sentence can be substituted for $A$ in ($*$)
with straightforward results. But the essentially grammatical problem with
quantifying over $A$ in ($*$) is definitive. There is no way to use this
schematic condition to globally define truth, and we can be quite certain
of this because any predicate which globally verified ($*$) would engender
a contradiction. It would give rise to a liar paradox.

\section{}

Similar comments can be made about what it means for an
object to fall under a concept. Just as with truth, there is no
difficulty in defining this relation in any particular case. For
instance, we can define what it means to fall under the concept
{\it white} by saying
$$\mbox{The object $x$ falls under the concept {\it white} $\leftrightarrow$
$x$ is white.}$$
But if we try to characterize the falling under relation globally by saying
$$\mbox{The object $x$ falls under the concept
$\ulcorner C\urcorner$ $\leftrightarrow$
$Cx$}\eqno{(\dag)}$$
then, just as with ($*$), a use/mention conflict arises when we try to
quantify over $C$. We can use ($\dag$) as a template to produce a falling
under definition for any particular concept; we can even take the
conjunction of the substitution instances of ($\dag$) as $C$ ranges
over all the concepts expressible in a given language, and thereby obtain
a falling under definition for that language, but this definition could
not itself belong to the language in question. As with ($*$),
in order to put ($\dag$) in a form that would allow $C$ to be
quantified, so that it could have global force,
we would need some device for converting a mention of an arbitrary
concept into a use of that concept. But that is exactly what the falling under
relation does for us. That is to say, we need to already have a global
notion of falling under before we can use ($\dag$) to define falling
under globally. Thus, no variant of ($\dag$) can succeed in globally
defining a falling under relation.

In fact, the twin difficulties with truth and falling under are not just
analogous, they are effectively equivalent. If we had a globally applicable
truth predicate, we could use it to define a global notion of falling under,
viz., ``An object falls under the concept $C$ $\leftrightarrow$ the
atomic proposition formed from a name for that object and $C$ is
true.'' Conversely, given a globally applicable falling
under relation, truth could be defined globally by saying ``The sentence
$A$ is true $\leftrightarrow$ every object falls under the predicate formed
by concatenating `is such that' with $A$''. We can now see that truth and
falling under are practically identical notions. Falling under is to
formulas with one free variable what truth is to sentences.

However, there is one striking difference between the two cases. The
globally problematic nature of truth does not have any
immediate implications for our understanding of logic, but, in contrast,
the globally problematic nature of falling under has severe consequences
for general second order quantification. As we have seen, expressions like
$(\exists C)Cx$ are, taken at face value, syntactically ill-formed.
The quantified concept variable $C$ cannot function predicatively because its
appearance in the quantifying phrase is a mention, not a use. In order to
make sense of expressions like this we need a global device for converting
a mention of an arbitrary concept into a use of that concept, which is just
to say that we need a global notion of falling under. But it should now be
clear that a global notion of falling under, in the form of a relation which
satisfies ($\dag$) for every concept $C$, is something we do not and cannot
have. (Cannot, because it would give rise to Russell's paradox.) Now, if
$C$ were restricted to range over
only those concepts appearing in some given language, then we could use
($\dag$) as a template to define falling under for those concepts and
thereby render quantification over them meaningful. But sentences
employing these quantifiers would not belong to the target
language, so this approach cannot be used to make sense of unrestricted
second order quantification. Specifically, it cannot be used to build a
language in which we have the ability to quantify over all concepts
expressible in that language while allowing the quantified concept
variable to predicate.

Thus, we are not straightforwardly able to assign meaning to statements in
which a quantified concept variable is supposed to function predicatively.

\section{}

This negative conclusion is unsatisfying because our syntactic considerations
forbid not only the paradoxical global definitions of truth and
falling under which we want to exclude, but also other global statements
which appear to be meaningful. For instance, we have remarked that in
limited settings truth and falling under are unproblematic, but it is not
obvious how to formalize this claim itself. We cannot say that for any
sentence $A$ there is a predicate $T$ such that $T(A) \leftrightarrow A$,
for the same reason that we cannot quantify over $A$ in ($*$): the mentions
of $T$ and $A$ in the quantifying phrases are followed by uses in the
expression $T(A) \leftrightarrow A$.  Reformulations like ``such that
$T(A) \leftrightarrow A$ holds'' or ``such that
$T(A) \leftrightarrow A$ is the case'' accomplish nothing because they
merely employ synonyms for truth. But this prohibition is confusing
because there clearly is some sense in which it is correct, even trivially
correct, to say that a truth definition can be given for any meaningful
sentence. We either have to adopt the mystical (and rather self-contradictory)
view that this is a fact which cannot be expressed, or else find some
legitimate way to affirm it.

The way forward is to recognize that truth and falling under do make sense
globally, but as constructive, not classical, notions. In both cases we
can recognize an indefinitely extensible quality: we are able to produce
partial classical characterizations of truth and falling under, but any
such characterization can
be extended. This fits with the intuitionistic conception of
mathematical reality as something which does not have a fixed global
existence but instead is open-ended and can only be constructed in
stages. The intuitionistic account may or may not be valid
as a description of mathematics, but it unequivocally does capture the
fundamental nature of truth and falling under. On pain of contradiction,
these notions do not enjoy a global classical existence. However,
they can indeed be built up in an open-ended sequence of stages.

The central concept in constructive mathematics is proof, not truth.
And this is just the linguistic resource we need to make
sense of second order quantification. Writing $\Box A$ for
``$A$ is provable'' and $p \, \vdash A$ for ``$p$ proves $A$'', we have
$$\Box A \leftrightarrow (\exists p)(p\, \vdash A).$$
Note that this expression can be universally quantified because no
appearance of $A$ is assertoric, so that it is a legitimate definition
of the box operator. Note also that there is no question about
formulating a global definition of the proof relation, as this is a
primitive notion which we do not expect to define in any simpler terms.
We can therefore, following the intuitionists, give a global constructive
definition of truth by saying
$$\mbox{$A$ is constructively true $\leftrightarrow$
$A$ is provable}\eqno{(**)}$$
and we can analogously give a global constructive definition of falling
under by saying
$$\mbox{$x$ constructively falls under $C$ $\leftrightarrow$
$C(x)$ is provable.}\eqno{(\dag\dag)}$$

More generally, we can use provability to repair use/mention problems in
expressions that quantify over all concepts. Such expressions can be
interpreted constructively, and the self-referential capacity of global
second order quantification makes it unreasonable to demand a classical
interpretation. In particular, we can solve
the problem raised at the beginning of this section.
The way we say that any sentence can be given a truth definition is:
for every sentence $A$ there is a predicate $T$ such that the assertion
$T(A) \leftrightarrow A$
is provable. Again, no appearance of $A$ or $T$ is assertoric, so the
quantification is legitimate. More substantially, we can affirm that
for any language $\Lc$ there is a predicate $T_\Lc$ such that inserting
any sentence of $\Lc$ in the template ``$T_\Lc(\cdot) \leftrightarrow \cdot$''
yields a provable assertion. Thus, there is a global
constructive definition of truth, and there are local classical definitions
of truth, but the global affirmation that these local classical
definitions always exist is constructive.

We can make the same points about falling under; here too we have both
local classical and global constructive options. The new feature in
this setting is that no local classical definition of falling under can
be used to make sense of sentences in which quantified concept variables
are supposed to function predicatively. In order to handle this problem
we require a globally applicable notion of falling under, which means that
the classical option is unworkable. We have to adopt a constructive
approach.

\section{}

The global constructive versions of truth and falling under are not
obviously paradoxical because the biconditional $\Box A \leftrightarrow A$
is not tautological. We cannot simply assume that asserting $A$ is
equivalent to asserting that $A$ is provable. The extent to which this
law holds is a function of both the nature of provability and the
constructive interpretation of implication. This issue is analyzed in
\cite{W1} (see also \cite{W2}); we find that the law
\begin{quote}
(1) $A \to \Box A$
\end{quote}
is valid but the converse inference of $A$ from $\Box A$ is legitimate only
as a deduction rule, not as an implication. Although the law $\Box A \to A$
is superficially plausible, its justification is in fact subtly circular.

The relation of $\Box$ to the standard logical constants, interpreted
constructively, is also investigated in \cite{W1}; we find that the laws
\begin{quote}
(2) $\Box(A \wedge B) \leftrightarrow (\Box A \wedge \Box B)$

\noindent (3) $\Box(A \vee B) \leftrightarrow (\Box A \vee \Box B)$

\noindent (4) $\Box((\exists x)A) \leftarrow (\exists x)\Box A$

\noindent (5) $\Box((\forall x)A) \to (\forall x)\Box A$

\noindent (6) $\Box(A \to B) \to (\Box A \to \Box B)$
\end{quote}
are all generally valid. There is no special law for negation; we
take $\neg A$ to be an abbreviation of $A \to \bot$ where $\bot$
represents falsehood, so using (6) we can say, for instance,
$$\Box(\neg A) \leftrightarrow \Box(A \to \bot)
\to (\Box A \to \Box \bot).$$
But $\Box(\neg A)$ is not provably equivalent to $\neg\Box A$ in general.

We can now present a formal system for reasoning about concepts that
allows quantified concepts to predicate. The language is the language of
set theory, augmented by the logical constant $\Box$. Formulas are built
up in the usual way, with the one additional clause that if $A$ is a
formula then so is $\Box A$.

The variables are taken to range over concepts and $\in$
is read as ``constructively falls under''. Thus no appearance of a
variable in any formula is assertoric and we can sensibly quantify
over any variable in any formula. The system employs the usual axioms
and deduction rules of an intuitionistic predicate calculus with equality,
together with the axioms (1) -- (6) above, the deduction rule which infers
$A$ from $\Box A$, the extensionality axiom
\begin{quote}
(7) $x = y \leftrightarrow (\forall u)(u \in x \leftrightarrow u \in y)$,
\end{quote}
and the comprehension scheme
\begin{quote}
(8) $(\exists x)(\forall r)(r \in x \leftrightarrow \Box A)$
\end{quote}
where $x$ can be any variable and $A$ can be any formula in which $x$
does not appear freely. (In this scheme the variable $r$ is fixed.) The
motivation for the comprehension scheme is that any formula defines a
concept (possibly with parameters, if $A$ contains free variables
besides $r$), and what it means to constructively fall under that
concept is characterized by ($\dag\dag$). This is why we
need the ability to explicitly reference the notion of provability.
The ex falso law can be justified in this setting by taking $\bot$ to
stand for the assertion $(\forall x,y)(x \in y)$.

We call the formal system described in this section CC (Constructive
Concepts). This is a ``pure'' concept system in the sense that there
are no objects besides concepts. Alternatively, we could (say) take
the natural numbers as given and write down a version of second order
arithmetic in which the set variables are interpreted as concepts.
From a predicative point of view a third order system, with number
variables, set variables, and concept variables, would also be natural
\cite{W0}.

\section{}

The system CC accomodates global reasoning about concepts. For instance,
using comprehension we can define the concept {\it concept which does not
provably fall under itself}. Denoting this concept $R$, we have
$$r \in R \qquad \leftrightarrow \qquad
\Box (r \not\in r).$$
Assuming $R \not\in R$ then yields $\Box(R \not\in R)$ by axiom (1), which
entails $R \in R$ by the definition of $R$. This shows that $R \not\in R$
is contradictory, so we conclude
$\neg(R \not\in R)$. On the other hand, assuming $R \in R$ immediately
yields $\Box (R \not\in R)$; but since $R \in R$ also implies
$\Box (R \in R)$, we infer $\Box \bot$. So we have $R \in R \to \Box \bot$.
In the language of \cite{W1}, the assertion $R \not\in R$ is false
and the assertion $R \in R$ is weakly false.

Thus, we can reason in CC about apparently paradoxical concepts and reach
substantive conclusions. But no contradiction can be derived, as we will
now show. (The proof of the following theorem is similar to the proof of
Theorem 6.1 in \cite{W1}.)

\begin{theo}
CC is consistent.
\end{theo}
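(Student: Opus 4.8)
The plan is to build a model for CC directly, along the lines one would use for a modal set theory whose modality ``looks one stage into the future.'' Concretely, I would set up a Kripke-style structure whose stages are indexed by $\Zb$ --- so that there is neither a first nor a last stage --- with a constant domain $D$ (constancy is needed to validate (4) and (5)) obtained as a term model: $D$ is generated by closing under the comprehension scheme, so that to each formula $\phi(r,\vec p)$ and each tuple $\vec a$ from $D$ corresponds a term $\{r:\Box\phi(r,\vec a)\}$ of $D$, and $=$ is interpreted by the extensionality axiom. Forcing is to obey the usual Kripke clauses for $\wedge,\vee,\to,\forall,\exists$ and equality, the clause $n\Vdash\Box A$ iff $n+1\Vdash A$ for the modality, and --- this being comprehension, imposed by hand --- the clause $n\Vdash t\in\{r:\Box\phi(r,\vec a)\}$ iff $n+1\Vdash\phi(t,\vec a)$ for atomic membership. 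Read this way, the shift structure makes axioms (1)--(6) hold for arbitrary formulas, the Kripke semantics validates the intuitionistic predicate calculus with equality, extensionality (7) holds once one checks that the interpretation of $=$ is a congruence (an induction on formulas), ex falso holds because $\bot$ is $(\forall x,y)(x\in y)$ and hence collapses every formula at any stage forcing it, and the rule inferring $A$ from $\Box A$ preserves validity because $\Box A$ is forced at every stage exactly when $A$ is. Consistency then follows as soon as we exhibit a single stage at which some pair of terms is not in the membership relation --- for example the concept $\{r:\Box(r\neq r)\}$, which one wants to be empty below some stage.

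The real work is the construction of the forcing relation, because the membership clause defines a stage's $\in$-relation in terms of the \emph{next} stage's truth values while the clause for $\to$ quantifies over all later stages, so the obvious operator whose fixed points are the admissible forcing relations fails to be monotone: implication is antitone in its antecedent. I would therefore obtain $\Vdash$ by a more careful device --- a simultaneous positive inductive definition of ``forced'' and ``refuted,'' or a revision-theoretic iteration --- and then have to verify (i) that the resulting relation, which will in general leave some sentences undecided at some stages, still validates intuitionistic logic, (ii) that it satisfies every instance of comprehension (this is the analogue of the fact that in Kripke's theory of truth the fixed point validates the $T$-sentences precisely because they are imposed as defining clauses), and (iii) persistence. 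The delicate point hidden inside (i)--(iii) is that the construction must not degenerate into the trivial model in which everything is forced at every stage, for such a structure models CC but is useless for consistency.

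So the main obstacle is precisely the compatibility of ``genuine fixed point of the membership clauses'' with ``non-degenerate.'' This is the model-theoretic shadow of the analysis in Section 5: the biconditional $\Box A\leftrightarrow A$ is not a tautology, and although CC proves, for the Russell concept $R$, both $\neg(R\notin R)$ and $R\in R\to\Box\bot$, it does not prove $\bot$; the model must reproduce exactly this behaviour, keeping $\bot$ unforced at some stage while honouring comprehension at every term. Making the self-referential fixed point and the non-degeneracy coexist is where the argument of Theorem 6.1 of \cite{W1} would have to be adapted, and I expect it to be by far the hardest step; the verification of the individual axioms and rules, by contrast, should be routine once the model is in hand.
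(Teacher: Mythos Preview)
Your plan diverges from the paper's argument and, as written, contains a genuine obstruction rather than merely an unfinished step. With the clauses you specify --- intuitionistic Kripke forcing on $(\Zb,\le)$, constant domain, $n\Vdash\Box A$ iff $n+1\Vdash A$, and $n\Vdash t\in\{r:\Box\phi\}$ iff $n+1\Vdash\phi(t)$ --- the Russell term $R=\{r:\Box(r\notin r)\}$ already rules out any model. First, $\bot=(\forall x,y)(x\in y)$ can never be forced: the term $c=\{r:\Box(r\neq r)\}$ is in your domain, $r=r$ is forced everywhere (reflexivity, or unfold extensionality), so $r\neq r$ is nowhere forced, hence $s\in c$ is nowhere forced, hence $\bot$ is nowhere forced. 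Given that, your clauses yield $n\Vdash R\in R$ iff $n+1\Vdash R\notin R$ iff $m\not\Vdash R\in R$ for every $m\ge n+1$. No upward-closed subset of $\Zb$ satisfies this: the empty set fails (the right side is vacuously true), all of $\Zb$ fails (the right side is false), and any half-line $[k,\infty)$ fails at $n=k$. So no persistent forcing relation meets your clauses, and dropping persistence forfeits soundness for intuitionistic $\to$. Your fallback --- a simultaneous forced/refuted induction --- does not escape this, because for the comprehension \emph{biconditional} to be forced you still need the equivalence of $t\in c_A$ and $\Box A(t)$ to hold outright at every stage, which is exactly what the Russell calculation forbids.

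The paper's proof avoids the problem by abandoning the Kripke-model picture. It tracks only a single monotone ordinal-indexed family $F_\alpha$ of \emph{refuted} sentences in a term language; the box clause is $\Box A\in F_\alpha$ iff $A\in F_\beta$ for some $\beta<\alpha$, and --- this is the key device --- the implication clause is $A\to B\in F_\alpha$ iff some $\beta\le\alpha$ has $B\in F_\beta$ but $A\notin F_\beta$. Once such a witnessing $\beta$ exists it persists, so the entire construction is monotone in $\alpha$ and stabilizes automatically at some countable $\alpha_0$; one then checks that $\bot\in F_{\alpha_0}$, that no axiom's universal closure lies in $F_{\alpha_0}$, and that the complement is closed under the rules. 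There is no frame and no fixed-point problem. For the Russell term, one computes $R\notin R\in F_0$, hence both $R\in R$ and $\Box(R\notin R)$ enter $F_1$; but at stage $0$ neither is yet refuted, so neither direction of the comprehension biconditional ever acquires a witnessing $\beta$ and the axiom instance stays out of $F$. The ``historical'' formulation of refuted implications is precisely the idea your sketch is missing.
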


\begin{proof}
We begin by adding countably many constants to the language of $CC$.
Let ${\mathcal L}$ be the smallest language which contains the language
of CC and which contains, for every formula $A$ of ${\mathcal L}$ in
which no variable other than $r$ appears freely, a constant symbol $c_A$.
Observe that ${\mathcal L}$ is countable.

We define the level $l(A)$ of a formula $A$ of ${\mathcal L}$ as follows.
The level of every atomic formula and every formula of the form $\Box A$ is 1.
The level of $A \wedge B$, $A \vee B$, and $A \to B$ is $\max(l(A), l(B)) + 1$.
The level of $(\forall x)A$ and $(\exists x)A$ is $l(A) + 1$.

Now we define a transfinite sequence of sets of sentences $F_\alpha$.
These can be thought of as the sentences which we have determined not
to accept as true. The definition of $F_\alpha$ proceeds by induction
on level. For each $\alpha$ the formula $\bot$ belongs to $F_\alpha$;
$c_B \in c_A$ belongs to $F_\alpha$ if $A(c_B)$ belongs to $F_\beta$ for
some $\beta < \alpha$; $c_A = c_{A'}$ belongs to $F_\alpha$ if for some
$c_B$, one but not both of $A(c_B)$ and $A'(c_B)$ belongs to $F_\beta$
for some $\beta < \alpha$; and $\Box A$ belongs to $F_\alpha$ if $A$
belongs to $F_\beta$ for some $\beta < \alpha$. (Recall that the constants
$c_A$ are only defined for formulas $A$ in which no variable other than $r$
appears freely. So expressions like $A(c_B)$ are unambiguous.) For levels
higher than 1, we apply the following rules.
$A \wedge B$ belongs to $F_\alpha$ if either $A$ or $B$ belongs to
$F_\alpha$. $A \vee B$ belongs to $F_\alpha$ if both $A$ and $B$ belong
to $F_\alpha$. $(\forall x)A$ belongs to $F_\alpha$ if $A(c_B)$ belongs
to $F_\alpha$ for some constant $c_B$, and $(\exists x)A$ belongs to
$F_\alpha$ if $A(c_B)$ belongs to $F_\alpha$ for every constant $c_B$.
(Observe here that if $(\forall x)A$ is a sentence then $A$ can contain
no free variables other than $x$, so again the expression $A(c_B)$ is
unambiguous.)
Finally, $A \to B$ belongs to $F_\alpha$ if there exists $\beta \leq \alpha$
such that $B$ belongs to $F_\beta$ but $A$ does not belong to $F_\beta$.

Since the language ${\mathcal L}$ is countable and the sequence $(F_\alpha)$
is increasing, this sequence must
stabilize at some countable stage $\alpha_0$. It is obvious that
$\bot$ belongs to $F_{\alpha_0}$. The proof is completed by checking that
the universal closure of no axiom of CC belongs to $F_{\alpha_0}$,
and that the set of formulas whose universal closure does not belong to
$F_{\alpha_0}$ is stable under the deduction rules of CC. This is
tedious but straightforward.
\end{proof}

\section{}

The system CC gives correct expression to Frege's idea of formalizing
reasoning about arbitrary concepts. Frege was impeded by the fact that
the global notion of falling under is inherently constructive;
treating this notion as if it were classical is the fatal mistake
which gives rise to Russell's paradox. We can locate the essential
error in Frege's analysis not in his Basic Law V, or any of his other
axioms, but rather in his use of a language whose cogency depends
on a fictitious global classical notion of falling under.

Analyzing the proof theoretic strength of CC will show us the degree
to which it is possible, as Frege hoped, to base mathematical reasoning
on the pure logic of concepts. The result is disappointing.
The simplicity of the consistency proof given in Theorem 5.1
already reveals that CC must be a very weak system. We now present two
positive results which show how (conservative extensions of) CC can in
a certain sense interpret more standard formal systems in which the box
operator does not appear.

The relevant sense is the notion of {\it weak interpretation} introduced in
\cite{W1}. We say that a theory ${\mathcal T}_2$ in which we are able to
reason about provability weakly interprets another theory ${\mathcal T}_1$
in the same language minus the box operator if every theorem of
${\mathcal T}_1$ is a theorem of ${\mathcal T}_2$ with all boxes
deleted. Observe that deleting all boxes in all theorems of CC yields
an inconsistency: as we saw earlier, we can prove in CC the existence
of a concept $R$ which satisfies both $\neg\neg (R \in R)$ and
$R \in R \to \Box \bot$, and deleting the box in the second formula
produces the contradictory conclusions $\neg\neg (R \in R)$ and
$\neg(R \in R)$.
Notwithstanding this phenomenon, no inconsistent theory can be weakly
interpreted in CC. This is because $\bot$ is a theorem of every
inconsistent theory, and weak interpretability would imply that $\Box^k \bot$
must be a theorem of CC for some value of $k$. Since CC implements the
deduction rule which infers $A$ from $\Box A$, this would then imply
that $\bot$ is a theorem of CC, i.e., that CC is inconsistent.

The first system we consider, ${\rm Comp}({\rm PF}_{\mathcal T}) + {\rm D}$,
was discussed in \cite{FH}, where its consistency was proven. Here we show
that the intuitionistic version of this system is weakly interpretable in
an extension of CC by definitions.

${\rm Comp}({\rm PF}_{\mathcal T}) + {\rm D}$ is a positive set
theory. Its language is the ordinary language of set theory augmented
by terms which are generated in the following way. Any variable is a term;
if $s$ and $t$ are terms then $s \in t$ and $s = t$ are positive
formulas; if $A$ and $B$ are positive formulas then $A \wedge B$,
$A \vee B$, $(\forall x)A$, and $(\exists x)A$ are positive
formulas; if $A$ is a positive formula and $x$ is a variable then
$\{x:A(x)\}$ is a term whose variables are the free variables of $A$
other than $x$. The system consists of the comprehension scheme
$$y \in \{x:A(x)\} \leftrightarrow A(y),$$
where $A$ is a positive formula and $x$ and $y$ are variables, together
with the axiom D which states
$$(\exists x,y)(x \neq y).$$

The desired conservative extension CC${}'$ of CC is obtained by recursively
adding, for every formula $A$ and variables $x$ and $y$, the term
$\{x: \Box A(x)\}$ (whose variables are
the free variables of $A$ other than $x$) together with the axiom
$$y \in \{x: \Box A(x)\} \leftrightarrow \Box A(y).$$

Say that a formula is {\it increasing}
if no implication appears in the premise of any other
implication. Note that since we take $\neg A$ to be an
abbreviation of $A \to \bot$, this also means that an increasing
formula cannot position a negation within the premise of any implication,
nor can it contain the negation of any implication.

Observe that the axiom $y \in \{x: \Box A(x)\} \leftrightarrow \Box A(y)$ is
increasing if $A$ is positive, and the formula $(\exists x,y)(x = y \to
\Box\bot)$, which is easily provable in CC, is also increasing. Since
removing all boxes from these formulas recovers the axioms of
${\rm Comp}({\rm PF}_{\mathcal T}) + {\rm D}$, the following result
is now a consequence of (\cite{W1}, Corollary 7.3).

\begin{theo}
CC$\,{}'$ weakly interprets intuitionistic
${\rm Comp}({\rm PF}_{\mathcal T}) + {\rm D}$.
\end{theo}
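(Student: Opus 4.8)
The plan is to deduce the theorem from Corollary 7.3 of \cite{W1}, so the first step is to fix the precise statement of that corollary. In the form relevant here it should say: if $\Tc_2$ is a theory in which one can reason about provability --- so that laws (1)--(6) and the rule inferring $A$ from $\Box A$ are available --- and $\Tc_1$ is a theory in the box-free sublanguage each of whose axioms is obtained by deleting all boxes from some \emph{increasing} formula that is provable in $\Tc_2$, then $\Tc_2$ weakly interprets $\Tc_1$. Granting this, with $\Tc_2 = {\rm CC}'$ and $\Tc_1$ the intuitionistic version of ${\rm Comp}({\rm PF}_\Tc) + {\rm D}$, the only thing left is to produce, for each axiom of $\Tc_1$, an increasing formula provable in ${\rm CC}'$ whose box-deletion is that axiom.

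There are two families of axioms. For a comprehension instance $y \in \{x:A(x)\} \leftrightarrow A(y)$ with $A$ positive, I would take the ${\rm CC}'$-axiom $y \in \{x:\Box A'(x)\} \leftrightarrow \Box A'(y)$, where $A'$ is the formula of ${\rm CC}'$ obtained from $A$ by recursively replacing each subterm $\{z:B(z)\}$ by $\{z:\Box B'(z)\}$. This is literally one of the axioms adjoined in forming ${\rm CC}'$; deleting its boxes returns the comprehension instance; and it is increasing because $A$, and hence $A'$, is positive and so contains no implication or negation, whence the two halves of the biconditional carry no implication or negation in their premises. For the axiom ${\rm D}$, i.e. $(\exists x,y)(x \neq y)$, I would take $(\exists x,y)(x = y \to \Box\bot)$, which is increasing and has box-deletion ${\rm D}$, and which is already provable in ${\rm CC}$: by comprehension form concepts $a$ with $r \in a \leftrightarrow \Box\bot$ and $b$ with $r \in b \leftrightarrow \Box(\forall z)(z=z)$; since $(\forall z)(z=z)$ is logically valid, axiom (1) gives $\Box(\forall z)(z=z)$ and hence $\forall r\,(r \in b)$; then extensionality (7) turns $a = b$ into $a \in a \leftrightarrow a \in b$, and combining $a \in b$ with $a \in a \leftrightarrow \Box\bot$ yields $\Box\bot$, so $a = b \to \Box\bot$ and therefore $(\exists x,y)(x = y \to \Box\bot)$. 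With both families in hand, Corollary 7.3 applies and gives the theorem.

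I expect the verifications above to be routine --- most of them are already recorded in the discussion preceding the theorem --- so the real weight of the argument is carried inside \cite{W1}. If one wanted a self-contained proof the main task would be to reprove Corollary 7.3, whose heart is a lifting lemma proved by induction on an intuitionistic derivation: from a proof of a theorem $\psi$ of $\Tc_1$ from its axioms one builds a box-decorated formula provable in $\Tc_2$ whose box-deletion is $\psi$, transporting boxes across $\wedge$, $\vee$, $\exists$, $\forall$ and $\to$ by means of (2)--(6). The delicate case, and the one I would expect to be the real obstacle, is implication introduction, when an assumption is discharged: since a $\Box$ cannot in general be pulled out of the antecedent of an implication, the induction closes only because every axiom lift is increasing, which keeps the troublesome antecedents box-free precisely where the bookkeeping requires it. A minor additional point is to check that the extra comprehension terms of ${\rm CC}'$ over ${\rm CC}$ do not disturb any of this; since they were introduced together with their defining axioms, which are increasing whenever the underlying formula is positive, this should cause no trouble.
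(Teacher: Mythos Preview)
Your proposal is correct and follows exactly the paper's approach: observe that the ${\rm CC}'$-axioms $y \in \{x:\Box A(x)\} \leftrightarrow \Box A(y)$ are increasing when $A$ is positive, that $(\exists x,y)(x = y \to \Box\bot)$ is increasing and provable in ${\rm CC}$, that both box-delete to the axioms of ${\rm Comp}({\rm PF}_\Tc) + {\rm D}$, and then invoke Corollary~7.3 of \cite{W1}. You supply more detail than the paper does---the explicit witnesses for ${\rm D}$, the recursive translation of nested abstraction terms, and a sketch of the content of Corollary~7.3---but the argument is the same.
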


It is interesting to note that the extensionality axiom of CC is not
increasing, so that we cannot weakly interpret intuitionistic
${\rm Comp}({\rm PF}_{\mathcal T}) + {\rm EXT} + {\rm D}$ in CC${}'$.
The latter theory is in fact inconsistent \cite{FH}.

We can also show that a different extension of CC by definitions weakly
interprets intuitionistic second order Peano arithmetic minus the induction
axiom. The extension is defined by adding a constant symbol 0 which satisfies
$$r \in 0 \leftrightarrow \Box \bot,$$
a unary function symbol $S$ which satisfies
$$r \in Sx \leftrightarrow \Box(r = x),$$
and a constant symbol $\omega$ which satisfies
$$r \in \omega \leftrightarrow
\Box(\forall z)[(0 \in z \wedge (\forall x)(x \in z \to Sx \in z))
\to r \in z].$$
The following formulas are easily proven in the resulting extension CC${}''$:
\begin{quote}
$0 \in \omega$;

\noindent $x \in \omega \to Sx \in \omega$;

\noindent $Sx = 0 \to \Box \bot$;

\noindent $Sx = Sy \to \Box(x = y)$;

\noindent $(0 \in z \wedge (\forall x)(x \in z \to Sx \in z)) \to
(\forall y)(y \in \omega \to \Box(y \in z))$.
\end{quote}
Since the first four of these formulas are increasing, the claimed result
again follows from (\cite{W1}, Corollary 7.3).

\begin{theo}
CC$\,{}''$ weakly interprets intuitionistic second order Peano arithmetic
minus induction.
\end{theo}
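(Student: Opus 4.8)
The plan is to recognize CC$''$ as an extension of CC by explicit definitions and then to read the theorem off the weak-interpretation machinery of \cite{W1}. First I would check that the three new symbols are legitimate definitional additions: the concept $\{r:\Box\bot\}$, the concept $\{r:\Box(r=x)\}$ for each $x$, and the concept $\{r:\Box(\forall z)[I(z)\to r\in z]\}$ --- writing $I(z)$ for $0\in z\wedge(\forall w)(w\in z\to Sw\in z)$ --- all exist by iterated use of the comprehension scheme (8) and are unique by extensionality (7), so they serve to introduce $0$, $S$, and $\omega$. Consequently CC$''$ is conservative over CC and, by Theorem 5.1, consistent. The target theory, intuitionistic second order Peano arithmetic minus induction, is --- apart from the second order comprehension apparatus, which is already present in CC --- axiomatized by the successor axioms $0\in\omega$, $(\forall x)(x\in\omega\to Sx\in\omega)$, $(\forall x)(Sx\neq 0)$, $(\forall x,y)(Sx=Sy\to x=y)$, the dropped induction axiom being the minimality statement $(\forall z)(I(z)\to(\forall y)(y\in\omega\to y\in z))$.

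Next I would carry out the five derivations in CC$''$ that the text indicates. For $0\in\omega$: after unfolding the definition of $\omega$ it suffices to prove $\Box(\forall z)[I(z)\to 0\in z]$, and the formula under the box is logically valid (take the first conjunct of $I(z)$), so (1) applies. For $(\forall x)(x\in\omega\to Sx\in\omega)$: the implication $(\forall z)[I(z)\to x\in z]\to(\forall z)[I(z)\to Sx\in z]$ is provable in intuitionistic logic --- given $I(z)$ one has $x\in z$ from the antecedent and then $Sx\in z$ from the closure clause of $I(z)$ --- so boxing it by (1) and applying (6) to the unfolded hypothesis $x\in\omega$ gives the conclusion. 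For $Sx=0\to\Box\bot$: from $Sx=0$, extensionality (7) yields $(\forall u)(u\in Sx\leftrightarrow u\in 0)$, hence $(\forall u)(\Box(u=x)\leftrightarrow\Box\bot)$ after unfolding; instantiating $u:=x$ and using $\Box(x=x)$ (which holds by (1) since $x=x$ is provable) gives $\Box\bot$. The axiom $Sx=Sy\to\Box(x=y)$ is obtained the same way, instantiating the extensionality biconditional at $u:=x$. The minimality formula $I(z)\to(\forall y)(y\in\omega\to\Box(y\in z))$, though not needed below, follows from an unfolded $y\in\omega$ by pushing the box through with (5), instantiating the bound variable at $z$, and combining $I(z)\to\Box I(z)$ (from (1)) with (6).

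I would then observe that the first four formulas are increasing: in each case the premise is atomic, so no implication and no negation is nested inside the premise of an implication. The minimality formula, on the other hand, is not increasing, precisely because its hypothesis $I(z)$ contains the implication $w\in z\to Sw\in z$; this is exactly why induction must be omitted. Since deleting every box from the four increasing formulas recovers precisely the four successor axioms above, the hypotheses of (\cite{W1}, Corollary 7.3) are satisfied, and that corollary yields the conclusion that CC$''$ weakly interprets intuitionistic second order Peano arithmetic minus induction, the comprehension scheme being common to the two theories.

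As to where the genuine care is needed: none of the individual steps is deep, and the author rightly describes the derivations as easily provable, so the only points that warrant attention are (a) verifying that the distribution laws (1)--(6) really do permit relocating $\Box$ exactly where each argument needs it --- in particular that (1) may be applied to the compound formula $I(z)$ and that (6) absorbs its embedded implications --- and (b) pinning down the precise axiomatization of ``second order Peano arithmetic minus induction'' so that the box-erased images of the cited formulas coincide with its axioms and so that the induction axiom is confirmed to be the unique non-increasing member of the natural list. Once that bookkeeping is settled, the theorem is an immediate instance of Corollary 7.3 of \cite{W1}.
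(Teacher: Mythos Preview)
Your proposal is correct and follows exactly the route taken in the paper: verify the five displayed formulas in CC$''$, observe that the first four are increasing while the induction-type formula is not, and invoke Corollary~7.3 of \cite{W1}. The paper itself merely asserts that the five formulas are ``easily proven'' and then applies the corollary, so your sketch in fact supplies the details the paper omits; in particular your derivations via extensionality for $Sx=0\to\Box\bot$ and $Sx=Sy\to\Box(x=y)$, and your use of (1), (5), (6) for the minimality clause, are exactly what is intended.
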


Since the induction axiom is not increasing it has to be excluded from
this result. Thus, although CC proves a version of full second order
induction, it nonetheless appears to possess only meager number theoretic
resources.

\bigskip
\bigskip

\end{document}